\documentclass[11pt,a4paper]{article}
\usepackage[T1]{fontenc}
\usepackage[utf8]{inputenc}
\usepackage[english]{babel}
\usepackage{lmodern,amsmath,amssymb,amsthm}
\usepackage[margin=1in]{geometry}

\newtheorem{theorem}{Theorem}[section]
\newtheorem{lemma}[theorem]{Lemma}

\newtheorem{corollary}[theorem]{Corollary}
\newcommand{\F}{\mathbb F}
\newcommand{\PG}{\mathrm{PG}}
\newcommand{\cQ}{\mathcal Q}
\newcommand{\cW}{\mathcal W}
\newcommand{\cH}{\mathcal H}
\newcommand{\Tr}{\operatorname{Tr}}
\usepackage{microtype}
\usepackage[hidelinks]{hyperref}
\numberwithin{equation}{section}
\newcommand{\doi}[1]{\href{https://doi.org/#1}{doi:\nolinkurl{#1}}}
\title{Line spreads of $\cW(2n-1,q)$, $q$ even}
\author{Daniele Bartoli\textsuperscript{1}\quad 
Antonio Cossidente\textsuperscript{2}\quad Giuseppe Marino\textsuperscript{3}\quad Francesco Pavese\textsuperscript{4}\\[1em]
\parbox{0.94\textwidth}{\centering\small
\textsuperscript{1}Department of Mathematics and Computer Science, University of Perugia, Perugia, Italy.\\
\texttt{daniele.bartoli@unipg.it}\\
\textsuperscript{2}Department of Engineering, University of Basilicata, Potenza, Italy.\\
\texttt{antonio.cossidente@unibas.it}\\
\textsuperscript{3}Department of Mathematics and Applications ``Renato Caccioppoli'', University of Naples Federico II, Naples, Italy.\\
\texttt{giuseppe.marino@unina.it}\\
\textsuperscript{4}Department of Mechanics, Mathematics and Management, Politecnico di Bari, Bari, Italy.\\
\texttt{francesco.pavese@poliba.it}
}}
\date{}
\hypersetup{pdftitle={Line spread of mathcal W(2n-1, q), q even}}
\begin{document}

\maketitle
\begin{abstract}
Let $q$ be an even prime power and $n\ge2$. We construct line spreads
of the symplectic polar space $\cW(2n-1,q)$ by replacing pairwise
disjoint reguli in a Desarguesian line spread. The construction is
based on partitions of the points of $\PG(n-1,q^2)$ into points of a
nondegenerate Hermitian variety and Baer sublines meeting that variety
in exactly one point. In the plane case,
the partition is invariant under a cyclic group of order $q^2-q+1$.
\end{abstract}
\noindent\textbf{2020 Mathematics Subject Classification.} Primary 51E23; Secondary 51A50, 05E30.\par\medskip
\noindent\textbf{Keywords:} symplectic polar space; line spread;
Hermitian variety; Baer subline; field reduction; regulus replacement.

\section{Introduction}\label{sec:introduction}
A line spread of a finite projective space is a set of pairwise
disjoint lines partitioning its point set. A line spread of a
symplectic polar space is required, in addition, to consist of totally
isotropic lines. Spreads and their interaction with classical polar
spaces are treated in \cite{Hirschfeld,HT}; for the rank-two case,
see also \cite{PT}.

The general theory of ovoids and spreads in finite classical polar
spaces provides a broader context for this problem \cite{Thas1981}.
Field reduction also gives a direct construction in certain dimensions:
when $n$ is even, a symplectic space of vector dimension $n$ over
$\F_{q^2}$ yields a line spread of $\cW(2n-1,q)$ over $\F_q$.
The case $n=3$ does not follow from this construction.
Hering's line spreads of $\cW(5,3)$ \cite{Hering1985} are relevant
small-field examples. De Winter and Metsch \cite[Remark~8]{DeWinterMetsch2020}
recall these examples and observe that relatively few line spreads of
$\cW(5,q)$ were known in the literature. This provides
motivation for a uniform construction.

Let $q$ be even, let $n\ge 2$, and let $\cH(n-1,q^2)$ be a
non-degenerate Hermitian variety of $\PG(n-1,q^2)$. Put $\varepsilon=(-1)^n$.
In this paper, we show that the points of $\PG(n-1,q^2)$ can be
partitioned into
\[
 p_n=\frac{(q^{n-2}+\varepsilon)(q^n-\varepsilon)}{q^2-1}
\]
points of $\cH(n-1,q^2)$ and
\[
 b_n=\frac{q^{n-2}(q^n-\varepsilon)}{q+1}
\]
Baer sublines, each meeting $\cH(n-1,q^2)$ in exactly one point,
where $\varepsilon=(-1)^n$.
Under field reduction from $\F_{q^2}$ to $\F_q$, the points of
$\PG(n-1,q^2)$ correspond to the lines of a Desarguesian spread of
$\PG(2n-1,q)$ \cite{LV}, while the points of a Baer subline correspond to a
regulus. As shown in Section~\ref{sec:reduction}, replacing the corresponding $b_n$ reguli by their opposite reguli gives a line spread of $\cW(2n-1,q)$. This spread contains $p_n$ lines on the associated quadric
$\mathcal Q^\varepsilon(2n-1,q)$ and $(q+1)b_n$ lines tangent to it.

There is also a connection with intriguing sets in dual polar graphs.
In the dual polar graph of $\cW(5,q)$, vertices are totally isotropic
planes, with two vertices adjacent when the planes intersect in a line.
For a line spread, take the planes containing at least one spread line.
Each selected plane has $q^2+q$ selected neighbours, while each
unselected plane has $q^2+q+1$ selected neighbours. Consequently, this
set defines a perfect two-colouring with nontrivial eigenvalue $-1$.
In particular, the rank-three spreads constructed here yield such
colourings for every even $q$; see \cite[Example~7]{DeWinterMetsch2020} for this graph construction.

Section~\ref{sec:reduction} presents the geometric setting.
Section~3 supplies the required partition and completes the construction.
In particular, the plane case uses a cyclic model of the Hermitian curve,
for which background can be found in \cite{BE}.

\section{Quadrics and regulus replacement}\label{sec:reduction}
Let $q$ be an even prime power, let $n\geq 2$, and let $\cW(2n-1,q)$
be a symplectic polar space of $\PG(2n-1,q)$.
Denote by $\cQ^\varepsilon$ a nondegenerate quadric of
$\PG(2n-1,q)$, with $\varepsilon=+1$ if the quadric is hyperbolic
and $\varepsilon=-1$ if it is elliptic, and assume that its associated
polar form defines $\cW(2n-1,q)$.
A line of $\PG(2n-1,q)$ is a line of $\cW(2n-1,q)$ if and only if
it is either tangent to or contained in $\cQ^\varepsilon$, where a
tangent line meets $\cQ^\varepsilon$ in exactly one point.

\begin{lemma}\label{lem:counts}
Let $\mathcal L$ be a line spread of $\cW(2n-1,q)$, and let
$\cQ^\varepsilon$ be a nondegenerate quadric of $\PG(2n-1,q)$
whose associated polar form defines $\cW(2n-1,q)$.
The numbers of lines of $\mathcal L$
contained in and tangent to $\cQ^\varepsilon$ are
\[
\frac{(q^{n-2}+\varepsilon)(q^n-\varepsilon)}{q^2-1}
\qquad\text{and}\qquad
q^{n-2}(q^n-\varepsilon),
\]
respectively.
\end{lemma}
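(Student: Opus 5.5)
Every line of $\mathcal L$ is either contained in or tangent to $\cQ^\varepsilon$, so the plan is a double count. Let $x$ be the number of spread lines contained in $\cQ^\varepsilon$ and $y$ the number tangent to it. Since $\mathcal L$ partitions the points of $\PG(2n-1,q)$, we get two equations. Counting all points gives
\[
(q+1)(x+y)=\frac{q^{2n}-1}{q-1},
\]
hence $x+y=(q^{2n}-1)/(q^2-1)$, the size of any line spread of $\PG(2n-1,q)$. Counting points of $\cQ^\varepsilon$ gives a second equation: a contained line has $q+1$ points of the quadric and a tangent line exactly one. With the standard count $|\cQ^\varepsilon(2n-1,q)|=(q^n-\varepsilon)(q^{n-1}+\varepsilon)/(q-1)$, this reads
\[
(q+1)x+y=\frac{(q^n-\varepsilon)(q^{n-1}+\varepsilon)}{q-1}.
\]

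Subtracting the first equation from the second gives
\[
qx=\frac{(q^n-\varepsilon)(q^{n-1}+\varepsilon)}{q-1}-\frac{q^{2n}-1}{q^2-1}.
\]
The key step is to factor $q^{2n}-1=(q^n-\varepsilon)(q^n+\varepsilon)$, which uses $\varepsilon^2=1$. Then
\[
qx=\frac{q^n-\varepsilon}{q^2-1}\Bigl((q+1)(q^{n-1}+\varepsilon)-(q^n+\varepsilon)\Bigr).
\]
The bracket simplifies to $q^{n-1}+\varepsilon q$, so
\[
x=\frac{(q^n-\varepsilon)(q^{n-2}+\varepsilon)}{q^2-1}.
\]
For $y=x+y-x$ we compute
\[
y=\frac{q^n-\varepsilon}{q^2-1}\bigl(q^n+\varepsilon-q^{n-2}-\varepsilon\bigr)=\frac{(q^n-\varepsilon)\,q^{n-2}(q^2-1)}{q^2-1}=q^{n-2}(q^n-\varepsilon),
\]
which is the claimed value.

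The only inputs are the dichotomy, stated just before the lemma, that $\cW(2n-1,q)$-lines are contained in or tangent to $\cQ^\varepsilon$, and the standard point count of nondegenerate quadrics. The latter I would quote from \cite{Hirschfeld,HT}. The only delicate point is the sign bookkeeping with $\varepsilon$, resolved by the factorization above. No property of $q$ even is needed beyond the existence of the dichotomy.
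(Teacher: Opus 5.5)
Your proposal is correct and follows the paper's proof exactly. You use the same two counting equations $x+y=|\mathcal L|$ and $(q+1)x+y=|\cQ^\varepsilon|$ and subtract them to get $x=(|\cQ^\varepsilon|-|\mathcal L|)/q$; you just spell out the simplification via $q^{2n}-1=(q^n-\varepsilon)(q^n+\varepsilon)$, which the paper leaves implicit.
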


\begin{proof}
Let $x$ and $y$ denote the numbers of lines of $\mathcal L$
contained in and tangent to $\cQ^\varepsilon$, respectively.
Since $\mathcal L$ partitions the points of
$\PG(2n-1,q)$,
\[
x+y=|\mathcal L|=\frac{q^{2n}-1}{q^2-1}.
\]
On the other hand, counting the points of $\cQ^\varepsilon$ on the lines of
$\mathcal L$ gives
\[
(q+1)x+y=|\cQ^\varepsilon|
       =\frac{(q^{n-1}+\varepsilon)(q^n-\varepsilon)}{q-1}.
\]
Subtracting these two identities yields
\[
x=\frac{|\cQ^\varepsilon|-|\mathcal L|}{q}
 =\frac{(q^{n-2}+\varepsilon)(q^n-\varepsilon)}{q^2-1}.
\]
Consequently,
\[
y=|\mathcal L|-x=q^{n-2}(q^n-\varepsilon),
\]
as required.
\end{proof}

Embed $\PG(2n-1,q)$ as a Baer subgeometry $\Sigma$ of
$\PG(2n-1,q^2)$, and let $\sigma$ be the Baer involution of
$\PG(2n-1,q^2)$ fixing
$\Sigma$ pointwise. Let $\Pi$ be an $(n-1)$-dimensional projective
subspace of $\PG(2n-1,q^2)$ such that
\[
\Pi\cap\Sigma=\varnothing
\qquad\text{and}\qquad
\Pi\cap\Pi^\sigma=\varnothing.
\]
Then $\Pi$ and $\Pi^\sigma$ span $\PG(2n-1,q^2)$. For $P\in\Pi$, put $\ell_P=\langle P,P^\sigma\rangle_{q^2}\cap\Sigma$.
The lines $\ell_P$, $P\in\Pi$, form the Desarguesian spread
$\mathcal S$.
For each line $\ell$ of $\cW(2n-1,q)$, let $\overline{\ell}$ denote
its extension to $\PG(2n-1,q^2)$. Assume further that the point set
\[
\bigcup_{\ell\text{ a line of }\cW(2n-1,q)}
\bigl(\overline{\ell}\cap\Pi\bigr)
\]
is a nondegenerate Hermitian variety $\mathcal H(n-1,q^2)$ of $\Pi$.
The lines of $\mathcal S$ corresponding to the points of
$\mathcal H(n-1,q^2)$ partition a nondegenerate quadric
$\cQ^\varepsilon$ of $\Sigma$, with $\varepsilon=(-1)^n$;
thus this quadric is elliptic, $\cQ^-$, if $n$ is odd and
hyperbolic, $\cQ^+$, if $n$ is even.

\begin{theorem}\label{thm:replacement}
Let $\mathcal A\subseteq\mathcal H(n-1,q^2)$ be a set of
\[
\frac{(q^{n-2}+(-1)^n)(q^n-(-1)^n)}{q^2-1}
\]
points, and let $\mathcal B$ be a collection of
\[
\frac{q^{n-2}(q^n-(-1)^n)}{q+1}
\]
Baer sublines, each meeting $\mathcal H(n-1,q^2)$ in exactly one
point, such that $\mathcal A$ and the members of $\mathcal B$
partition the points of $\Pi$.
Then this partition gives rise to a line spread of $\cW(2n-1,q)$.
\end{theorem}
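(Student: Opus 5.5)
The plan is to build the spread explicitly. Take the lines $\ell_P$ for $P\in\mathcal A$, and for each Baer subline $b\in\mathcal B$ take the opposite regulus of the regulus $\mathcal R_b=\{\ell_P : P\in b\}$. Then verify three facts in order: (i) $\mathcal R_b$ really is a regulus of $\Sigma$; (ii) every line of the opposite regulus $\mathcal R_b^{\mathrm{opp}}$ is a line of $\cW(2n-1,q)$; (iii) the resulting set of lines partitions the points of $\Sigma$.

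For (i) I use the standard field-reduction fact \cite{LV}. If $b$ spans the line $m$ of $\Pi$, then the lines $\ell_P$, $P\in b$, lie in the $3$-space $\langle m,m^\sigma\rangle\cap\Sigma$. A Baer subline is mapped to a regulus there, since the $q+1$ transversal lines joining $Q\in \bar b$ to $Q^\sigma$ are fixed by $\sigma$; equivalently, a Baer subline is the field reduction of a subline over $\F_q$.

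For (iii), the points of $\Pi$ are partitioned by $\mathcal A$ and $\mathcal B$, and $\mathcal S$ is a spread. Hence the point sets $\bigcup_{P\in\mathcal A}\ell_P$ and $\bigcup_{P\in b}\ell_P$, $b\in\mathcal B$, partition $\Sigma$. A regulus and its opposite cover the same $(q+1)^2$ points, and the lines of each regulus are pairwise disjoint. So the new collection is a line spread of $\PG(2n-1,q)$. The lines $\ell_P$ with $P\in\mathcal A\subseteq\mathcal H(n-1,q^2)$ lie in $\cQ^\varepsilon$ by the setup, hence are lines of $\cW(2n-1,q)$.

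The main obstacle is (ii). Let $b\in\mathcal B$ meet $\mathcal H(n-1,q^2)$ in the single point $P_0$. Then $\ell_{P_0}\subseteq\cQ^\varepsilon$, and for $P\in b\setminus\{P_0\}$ the line $\ell_P$ is not a line of $\cW$ at all: by the hypothesis on $\mathcal H$, a line of $\cW$ meets $\Pi$ in a point of $\mathcal H$. So $\ell_P$ meets $\cQ^\varepsilon$ in $0$ or $2$ points.

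I would count the points of $\cQ^\varepsilon$ in the hyperbolic quadric $Q_b$ of the $3$-space $\Sigma_b=\langle m,m^\sigma\rangle\cap\Sigma$ that carries $\mathcal R_b$. Write $\cQ^\varepsilon\cap\Sigma_b=K$. Since $\ell_{P_0}\subseteq K$ and the other lines of $\mathcal R_b$ are not tangent, I would argue that $K\cap Q_b$ is exactly $\ell_{P_0}$. The idea is that a transversal line $t$ of $\mathcal R_b^{\mathrm{opp}}$ meets $K$ in the point $t\cap\ell_{P_0}$. If $t$ met $K$ in a second point, then $t$ meets $K$ in $2$ or $q+1$ points. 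In that case I would use the polarity: the lines $\ell_P$ generate $\Sigma_b$, and $\ell_{P_0}$ is totally singular. From this I would try to show that the symplectic form restricted to $\Sigma_b$ agrees with that of $Q_b$, so that $\Sigma_b$ is nondegenerate and all transversals, being totally singular for $Q_b$, are totally isotropic.

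Concretely, I would first try the following argument. The restriction of the symplectic form to $\Sigma_b$ is nondegenerate, because $\ell_P$ for $P\ne P_0$ is non-isotropic. Both $\mathcal R_b$ and $\mathcal R_b^{\mathrm{opp}}$ consist of lines of $Q_b$. A line $\ell$ of $\Sigma_b$ is isotropic if and only if it is fixed by the associated symplectic polarity $\perp$. Since $\ell_{P_0}^\perp=\ell_{P_0}$, I would try to show that $\perp$ preserves $Q_b$, and then that it induces the polarity of $Q_b$. A symplectic polarity of $\PG(3,q)$ preserving $Q_b$ either coincides with the $Q_b$-polarity, in which case every line of both reguli is isotropic, or swaps the two reguli. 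The latter is impossible because $\ell_{P_0}$ is isotropic. The former would make all $\ell_P$ isotropic, which contradicts the previous paragraph.

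This suggests the correct picture is more delicate: $\perp$ fixes $\mathcal R_b^{\mathrm{opp}}$ linewise while acting nontrivially on $\mathcal R_b$, fixing only $\ell_{P_0}$. Establishing this is exactly the step I expect to require a coordinate computation. I would carry it out in the model $\F_{q^2}^n$ with the form $\Tr(h(x,y))$, where $h$ is the Hermitian form. There I would check directly that the $\F_q$-sublines through $P_0$ reduce to isotropic transversals when $h(P_0,P_0)=0$ and $q$ is even.

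Finally, Lemma~\ref{lem:counts} gives a consistency check: it counts $|\mathcal A|$ lines on $\cQ^\varepsilon$ and $(q+1)|\mathcal B|$ tangent lines.
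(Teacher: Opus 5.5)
Your steps (i) and (iii) agree with the paper. Step (ii), which you rightly call the main obstacle, is not proved.

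Your justification of $K\cap Q_b=\ell_{P_0}$ uses only that the lines $\ell_P$, $P\in b\setminus\{P_0\}$, are not lines of $\cW(2n-1,q)$. That still allows them to be secant to $\cQ^\varepsilon$, so the claim does not follow. The polarity detour does not repair this, for three reasons:
\begin{itemize}
\item Non-isotropy of these $\ell_P$ does not make the restricted symplectic form on $\Sigma_b$ nondegenerate. Take the model with form $\Tr_{q^2|q}(h(x,y))$, $h$ the Hermitian form. If $b$ spans an $\F_{q^2}$-line tangent to $\mathcal H(n-1,q^2)$ at $P_0$, then $\ell_{P_0}$ is the radical of that restriction.
\item Your argument ends in a contradiction that you do not resolve.
\item The coordinate computation you fall back on is only announced.
\end{itemize}
As written, nothing shows that the lines of $\mathcal R_b^{\mathrm{opp}}$ are totally isotropic.

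The missing observation is already in the setup: the lines $\ell_P$ with $P\in\mathcal H(n-1,q^2)$ \emph{partition} $\cQ^\varepsilon$. Since the lines of $\mathcal S$ are pairwise disjoint, $\ell_P\cap\cQ^\varepsilon=\varnothing$ for every $P\in b\setminus\{P_0\}$. These lines are external, not merely meeting $\cQ^\varepsilon$ in ``$0$ or $2$ points''. Hence the $(q+1)^2$ points covered by $\mathcal R_b$ meet $\cQ^\varepsilon$ exactly in $\ell_{P_0}$.

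Each $t\in\mathcal R_b^{\mathrm{opp}}$ lies in this point set and meets every line of $\mathcal R_b$ in exactly one point, in particular $\ell_{P_0}$. So $|t\cap\cQ^\varepsilon|=1$, i.e.\ $t$ is tangent. Since $q$ is even, a tangent line is a line of $\cW(2n-1,q)$ by the criterion stated at the start of Section~\ref{sec:reduction}, and (ii) follows. This is precisely the paper's argument; no polarity or coordinate computation is needed.
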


\begin{proof}
Use the Desarguesian spread $\mathcal S$ defined above. For each Baer subline $b\in\mathcal B$, the set
$\mathcal R_b=\{\ell_P:P\in b\}$ is a regulus in a solid of $\Sigma$.
Let $\mathcal R_b'$ be its opposite regulus. The two reguli cover
exactly the same points. Since the members of $\mathcal B$ are pairwise disjoint, the point sets covered by their reguli are pairwise disjoint. Hence
\[
\mathcal L=\{\ell_P:P\in\mathcal A\}\;\cup\!
\left(\bigcup_{b\in\mathcal B}\mathcal R_b'\right)
\]
is a line spread of $\Sigma$.

For $P\in\mathcal A$, the line $\ell_P$ is contained in $\cQ^\varepsilon$
and therefore belongs to $\cW(2n-1,q)$. If
$b\cap\mathcal H(n-1,q^2)=\{P_b\}$, the points covered by
$\mathcal R_b$ meet $\cQ^\varepsilon$ precisely in $\ell_{P_b}$.
Every line of $\mathcal R_b'$ meets $\ell_{P_b}$ in one point and
thus meets $\cQ^\varepsilon$ in exactly one point. Consequently, every line of $\mathcal R_b'$ also belongs to
$\cW(2n-1,q)$, proving the assertion.
\end{proof}

\section{A partition of \texorpdfstring{$\PG(n-1,q^2)$}{PG(n-1, q²)}}\label{sec:initial}

\subsection{A partition of the projective line}

We begin with a partition of the projective line into two points of $\mathcal H(1,q^2)$ and $q-1$ Baer sublines that have exactly one point in common with $\mathcal H(1,q^2)$.

\begin{theorem}\label{thm:line}
The points of $\PG(1,q^2)$ can be partitioned into two points of
$\mathcal H(1,q^2)$ and $q-1$ Baer sublines, each meeting
$\mathcal H(1,q^2)$ in exactly one point.

\end{theorem}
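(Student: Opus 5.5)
We will prove Theorem~\ref{thm:line} by writing down an explicit partition in coordinates. It uses two facts: on a projective line the points of a nondegenerate Hermitian variety form a Baer subline, and squaring is a bijection of $\F_q$ when $q$ is even. A count shows what the partition must look like. $\PG(1,q^2)$ has $q^2+1$ points, and $\mathcal H(1,q^2)$ has $q+1$ points. So we need two points of $\mathcal H(1,q^2)$ together with $q-1$ pairwise disjoint Baer sublines, and these sublines must pick up the remaining $q-1$ points of $\mathcal H(1,q^2)$, one each. Indeed $2+(q-1)(q+1)=q^2+1$, so the sizes are consistent.

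\emph{Choice of coordinates.} Use homogeneous coordinates $(x_0,x_1)$ and the affine coordinate $x=x_1/x_0$, with $\infty=(0,1)$. Take the Hermitian form $x_0x_1^q+x_1x_0^q$; since every nondegenerate Hermitian form on the line is equivalent to it, this is no loss of generality. Its isotropic points are $\infty$ and the $x$ with $x^q+x=0$. Because $q$ is even, $x^q+x=0$ means $x^q=x$, so
\[
\mathcal H(1,q^2)=\F_q\cup\{\infty\}.
\]

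\emph{The partition.} Put $\mathcal A=\{0,\infty\}\subseteq\mathcal H(1,q^2)$. For each $\lambda\in\F_q^*$ put
\[
b_\lambda=\{x\in\F_{q^2}^*: x^{q+1}=\lambda\}.
\]
The norm map $\F_{q^2}^*\to\F_q^*$ is surjective with fibres of size $q+1$. Hence the $q-1$ sets $b_\lambda$ partition $\F_{q^2}^*$, and together with $\mathcal A$ they partition $\PG(1,q^2)$.

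\emph{Each $b_\lambda$ is a Baer subline.} In homogeneous form, $b_\lambda$ is the zero set of $x_1^{q+1}-\lambda x_0^{q+1}$. This is a nondegenerate Hermitian form on the line, because $\lambda\in\F_q^*$. Its isotropic points therefore form a Baer subline: it is projectively equivalent to $\mathcal H(1,q^2)$, which we showed above is $\PG(1,q)$.

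\emph{Each $b_\lambda$ meets $\mathcal H(1,q^2)$ in one point.} We have $b_\lambda\cap\mathcal H(1,q^2)=\{x\in\F_q^*: x^{q+1}=\lambda\}$. For $x\in\F_q$ we have $x^{q+1}=x^2$, so this is the solution set of $x^2=\lambda$ in $\F_q$. Since $q$ is even, squaring is a bijection of $\F_q$, so there is exactly one solution, namely $\lambda^{q/2}$.

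The only step needing care is the identification of the norm circles $b_\lambda$ as Baer sublines; the rest is counting and the characteristic-two fact. If one prefers to avoid the Hermitian-form argument, there is a direct check. Fix $x_\lambda\in b_\lambda$; then $b_\lambda=x_\lambda\cdot\{u:u^{q+1}=1\}$. The unit circle $\{u:u^{q+1}=1\}$ is the image of $\F_q\cup\{\infty\}$ under the Möbius map $t\mapsto (t+\omega)/(t+\omega^q)$ for any $\omega\in\F_{q^2}\setminus\F_q$, so it is a Baer subline, and multiplication by $x_\lambda$ is a projectivity.
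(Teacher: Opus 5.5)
Your proof is correct, and it takes a different route to what is in fact the same partition.

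The paper works in the elliptic-quadric model of the inversive plane. It takes two points $P_1,P_2$ of the conic $C$ representing $\mathcal H(1,q^2)$. It lets $r$ be the intersection of the tangent planes at $P_1,P_2$, which is an external line through the nucleus $N$ of $C$. It then uses the $q-1$ secant planes through $r$; each meets the plane of $C$ in a line through $N$, hence meets $C$ exactly once.

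Your norm circles $x^{q+1}=\lambda$, with carriers $0$ and $\infty$, are exactly this pencil of sections written in coordinates. Your use of the fact that squaring is a bijection of $\F_q$ plays the role of the nucleus: in both proofs this is where $q$ even enters.

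What your version gains is self-containedness. You verify directly that each $b_\lambda$ is a Baer subline, via the Hermitian form $\mathrm{diag}(-\lambda,1)$ or the M\"obius map. The paper instead relies on the cited dictionary between Baer sublines of $\PG(1,q^2)$ and plane sections of $\cQ^-(3,q)$.

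What the paper's version gives at no cost is that the two points may be prescribed arbitrarily on $\mathcal H(1,q^2)$. This is used later in Theorem~\ref{thm:extension}, where $P$ must be one of the two points. Your argument yields this too, but you should say so. Mere equivalence of Hermitian forms does not by itself place two prescribed points at $0$ and $\infty$. Instead, given distinct isotropic points, choose spanning vectors $e,f$ with $h(e,f)=1$, which is possible by nondegeneracy. In the basis $(e,f)$ the form becomes $x_0y_1^q+x_1y_0^q$, with the prescribed points at $0$ and $\infty$.
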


\begin{proof}
In the elliptic-quadric model of the inversive plane
(see \cite{Hirschfeld}), the points of $\PG(1,q^2)$ correspond
to the points of $\cQ^-(3,q)$, and Baer sublines correspond to
its plane conic sections. In particular, the $q+1$ points of
$\mathcal H(1,q^2)$ correspond to the $q+1$ points of a conic
$C$ of the elliptic quadric $\cQ^-(3,q)$.
Let $P_1,P_2$ be two fixed points of $C$, and let $\pi_i$ be the
plane tangent to $\cQ^-(3,q)$ at $P_i$, for $i=1,2$.
Then the line $r=\pi_1\cap\pi_2$ is external to $\cQ^-(3,q)$
and passes through the nucleus $N$ of $C$.
The remaining $q-1$ planes through $r$ meet
$\cQ^-(3,q)$ in conics $C_1,\ldots,C_{q-1}$.
The plane of $C$ does not contain $r$. Each plane of a conic $C_i$ therefore meets the plane of $C$ in a line through $N$, which meets $C$ in exactly one point. Thus $|C_i\cap C|=1$ for $i=1,\ldots,q-1$.
The plane sections in this pencil partition the elliptic quadric
into $P_1,P_2$ and $C_1,\ldots,C_{q-1}$, giving the required
partition. The two points may be prescribed arbitrarily
on $\mathcal H(1,q^2)$.
\end{proof}

\subsection{A cyclic partition of the projective plane}

We next give a cyclic construction of an analogous partition of the projective plane.

Let $q$ be an even prime power. We use the cyclic model
\[
\mathcal P=\{P_x=(x,x^{q^2},x^{q^4}):x\in \F_{q^6}^*\}\cong\PG(2,q^2),
\]
where $P_x=P_y$ if and only if $y/x\in \F_{q^2}^*$.
The auxiliary plane is
\[
\pi=\{p_y=(y,y^{q^2},y^q):y\in \F_{q^3}^*\}\cong\PG(2,q),
\]
with $p_y=p_z$ if and only if $z/y\in \F_q^*$. The norm $N(x)=x^{q^3+1}$ induces the map
\[
\begin{aligned}
\nu:\mathcal P&\longrightarrow\pi,\\
P_x&\longmapsto p_{N(x)}
=\bigl(x^{q^3+1},x^{q^5+q^2},x^{q^4+q}\bigr).
\end{aligned}
\]
We take
\[
\mathcal H(2,q^2)=\{P_x:\Tr_{q^3|q}(N(x))=0\}.
\]
This is a nondegenerate Hermitian curve of $\mathcal P$: its defining Hermitian form is given by $\Tr_{q^6|q^2}(xy^{q^3})$, where $\Tr$ denotes the trace function. For background on cyclic models of Hermitian curves, see~\cite{BE}. Thus $\mathcal H(2,q^2)=\nu^{-1}(h)$, where
\[
h=\{p_y:\Tr_{q^3|q}(y)=0\}
\]
is a line of $\pi$.

For $c\in\F_{q^6}^*$ with $N(c)=1$, multiplication by $c$ induces the projectivity
\[
g_c:P_x\longmapsto P_{cx},\qquad
(x,x^{q^2},x^{q^4})\longmapsto
(cx,c^{q^2}x^{q^2},c^{q^4}x^{q^4}).
\]
Since multiplication by $c$ is an invertible $\F_{q^2}$-linear map on $\F_{q^6}$, the map $g_c$ fixes $\mathcal P$ setwise. The norm-one elements form a cyclic group of order $q^3+1$, whose kernel on $\mathcal P$ consists of the $q+1$ elements of $\F_{q^2}^*$ having norm one. Thus the induced group is
\[
G=\{c\in\F_{q^6}^*:N(c)=1\}/\{c\in\F_{q^2}^*:N(c)=1\},
\qquad |G|=q^2-q+1.
\]
Moreover, $N(cx)=N(c)N(x)=N(x)$, and hence
\[
\Tr_{q^3|q}(N(cx))=\Tr_{q^3|q}(N(x)).
\]
Consequently, $G$ also fixes $\mathcal H(2,q^2)$ setwise and satisfies $\nu\circ g=\nu$ for every $g\in G$.

The group $G$ acts semiregularly on $\mathcal P$. Indeed, if $g_c(P_x)=P_x$, then $c\in\F_{q^2}^*$, so $g_c$ is the identity.

For every $p_y\in\pi$, the set $\nu^{-1}(p_y)$ is a $G$-orbit of size $q^2-q+1$. Indeed, if $\nu(P_x)=\nu(P_z)$, then $N(z)/N(x)\in\F_q^*$. As squaring permutes $\F_q^*$, one can choose $\lambda\in\F_q^*$ such that $N(\lambda z)=N(x)$. The element $c=\lambda z/x$ then has norm one and satisfies $g_c(P_x)=P_z$.

\begin{theorem}\label{thm:plane}
The plane $\mathcal P$ admits a partition into $q(q^2-q+1)$ Baer sublines having one point in common with $\mathcal H(2,q^2)$ and $q^2-q+1$ points of $\mathcal H(2,q^2)$, invariant under $G$.
\end{theorem}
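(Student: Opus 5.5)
The plan is to reduce the statement to a problem in the auxiliary plane $\pi$, using the fibration $\nu:\mathcal P\to\pi$. Its fibres are exactly the $G$-orbits, each of size $q^2-q+1$, and $\mathcal H(2,q^2)=\nu^{-1}(h)$. The counts fit this picture:
\[
q(q^2-q+1)(q+1)+(q^2-q+1)=(q^2-q+1)(q^2+q+1)=|\mathcal P|.
\]
So a $G$-invariant partition should consist of one full fibre over a point $p_0\in h$, serving as the set of Hermitian points, together with $q$ $G$-orbits of Baer sublines, each orbit of length $q^2-q+1$.

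First I record the basic lifting principle. Suppose $b$ is a Baer subline of $\mathcal P$ on which $\nu$ is injective, and put $S=\nu(b)$, a set of $q+1$ points of $\pi$. Since $G$ acts regularly on each fibre $\nu^{-1}(p)$, $p\in S$, and $b$ meets each such fibre in exactly one point, the sublines $g(b)$, $g\in G$, are pairwise disjoint and cover exactly $\nu^{-1}(S)$. Moreover, $b$ meets $\mathcal H(2,q^2)$ in exactly one point if and only if $|S\cap h|=1$. The theorem therefore reduces to the following task: find $q$ Baer sublines $b_1,\dots,b_q$, each mapped injectively by $\nu$, such that the sets $S_i=\nu(b_i)$ partition $\pi\setminus\{p_0\}$ and each meets $h$ in exactly one point. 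Note that $h\setminus\{p_0\}$ has exactly $q$ points, one for each $S_i$, which is consistent. Taking $\mathcal A=\nu^{-1}(p_0)$ and $\mathcal B=\{g(b_i)\}$ then gives the $G$-invariant partition, since $G$ fixes $\mathcal A$ (because $\nu\circ g=\nu$) and permutes $\mathcal B$.

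The core step, and the main obstacle, is to understand $\nu(b)$ for a Baer subline $b$ and to choose the $b_i$. I would parametrize a Baer subline as $\{P_{u+tv}: t\in\F_q\}\cup\{P_v\}$ with $u,v\in\F_{q^6}$ chosen $\F_{q^2}$-independent. Then
\[
N(u+tv)=N(u)+t\,\Tr_{q^6|q^3}(u v^{q^3})+t^2N(v),
\]
so the coordinates of $\nu$ on $b$ are quadratic in $t$. Hence $\nu(b)$ is a conic of $\pi$, or degenerates to a line when $uv^{q^3}$ has suitable trace. I expect the line case to occur when $\Tr_{q^6|q^3}(uv^{q^3})=0$, where $\nu(b)$ is the line through $p_{N(u)}$ and $p_{N(v)}$. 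Since $q$ is even, squaring is bijective on $\F_q$, which should make $t\mapsto t^2$ harmless and $\nu|_b$ injective in this case.

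I would first try to realize the $S_i$ using curves of $\pi$ from a pencil adapted to $p_0$ and $h$. For $q$ even, conics through $p_0$ sharing a common nucleus/tangent configuration give $q$ conics that pairwise meet only in $p_0$. Removing $p_0$ and adjoining suitable points of $h$ should give sets of size $q+1$ that are realizable as images of Baer sublines. If this conic approach fails, the fallback is to take $S_i=(m_i\setminus\{p_0\})\cup\{t_i\}$, where $m_i$ runs over the $q$ lines through $p_0$ other than $h$ and $t_i\in h\setminus\{p_0\}$. I would then choose $u,v$ explicitly so that the quadratic parametrization above traces exactly this set; this amounts to verifying conditions on $N(u)$, $N(v)$ and $\Tr_{q^6|q^3}(uv^{q^3})$ using the surjectivity of $N$ onto $\F_{q^3}^*$. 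This explicit realization is where most of the computation lies. Once it is done, the final assembly is immediate from the lifting principle.
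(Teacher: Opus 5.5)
Your reduction to $\pi$ is the same as the paper's: one full fibre $\nu^{-1}(p_0)$ over a point $p_0\in h$, plus $G$-orbits of Baer sublines on which $\nu$ is injective. Your lifting principle is also correct. The gap is the core step, because neither of your candidate families $S_i$ can occur as a $\nu$-image. By your own formula, $\nu(b)$ is the image of the conic $\{(r^2,rt,t^2)\}$ of $\PG(2,q)$ under the $\F_q$-linear map $(X_0,X_1,X_2)\mapsto X_0N(u)+X_1T+X_2N(v)$, where $T=\Tr_{q^6|q^3}(uv^{q^3})$. No point of that conic lies in the kernel, since $ru+tv\neq0$. For $q$ even, $\nu|_b$ is then injective in only two cases:
\begin{itemize}
\item the map is bijective, and $\nu(b)$ is a nondegenerate conic;
\item its kernel is the nucleus $(0,1,0)$, i.e.\ $T=0$, and $\nu(b)$ is a line (through any other point off a conic there passes a secant).
\end{itemize}
Hence each $S_i$ must be a line or a nondegenerate conic of $\pi$. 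Your fallback $(m_i\setminus\{p_0\})\cup\{t_i\}$ contains $q\ge4$ collinear points without being a line. Your first option $(C_i\setminus\{p_0\})\cup\{t_i\}$, with $t_i\notin C_i$, shares $q$ points with the conic $C_i$. So for $q\ge8$ it is neither a conic (distinct conics meet in at most four points) nor a line. The plan therefore cannot be completed as described.

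The missing idea is the correct partition of $\pi\setminus\{p_0\}$. Since two lines of $\pi$ always meet, at most one $S_i$ can be a line. The paper works in coordinates with $h:Y=0$ and $p_0=(0,0,1)$, and takes the pencil $X^2+XY+\delta^2Y^2=\mu Z^2$, $\mu\in\F_q\cup\{\infty\}$, with $\Tr_{q|2}(\delta)=1$. Its members are:
\begin{itemize}
\item $p_0$ itself ($\mu=0$);
\item the line $Z=0$ ($\mu=\infty$);
\item $q-1$ conics that do \emph{not} pass through $p_0$ but have it as common nucleus, so $h$, which passes through $p_0$, is tangent to each.
\end{itemize}

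A second point your plan underestimates is that a conic need not lift, and surjectivity of $N$ does not settle this. Writing $v=uz$, you need $z\in\F_{q^6}\setminus\F_{q^3}$ with $z+z^{q^3}=T/N(u)$ and $z^{q^3+1}=N(v)/N(u)$. That is, $N(u)X^2+TX+N(v)$ must be irreducible over $\F_{q^3}$, equivalently $\Tr_{q^3|2}\bigl(N(u)N(v)/T^2\bigr)=1$. This is an invariant of the conic itself; lines, by contrast, always lift. The paper's conditions $\Tr_{q^3|q}(a)=0$, $\Tr_{q^3|q}(b)=\Tr_{q^3|q}(ab)=1$ and $\Tr_{q|2}(\delta)=1$ are chosen exactly to give $\Tr_{q^3|2}\bigl((1+sa)(\delta+sb)\bigr)=\Tr_{q|2}(\delta+s+s^2)=1$ for every $s\in\F_q^*$. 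The line is realized by $u=a$, $v=\sqrt{ab}\in\F_{q^3}$, for which $T=0$.
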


\begin{proof}
Choose $a,b\in \F_{q^3}$ such that
\[
a\ne0,\qquad \Tr_{q^3|q}(a)=0,\qquad \Tr_{q^3|q}(b)=\Tr_{q^3|q}(ab)=1,
\]
and choose $\delta\in \F_q$ satisfying
\[
\Tr_{q|2}(\delta)=1.
\]
Such $a,b$ exist: a nonzero trace-zero element $a$ does not belong to $\F_q$, and the $\F_q$-linear map
\[
b\in\F_{q^3}\longmapsto
\bigl(\Tr_{q^3|q}(b),\Tr_{q^3|q}(ab)\bigr)\in\F_q^2
\]
is surjective, since the linear forms $b\mapsto\Tr_{q^3|q}(b)$ and $b\mapsto\Tr_{q^3|q}(ab)$ are linearly independent over $\F_q$.

The elements $a^2,ab,a$ form an $\F_q$-basis of $\F_{q^3}$. Indeed, a relation $\alpha a+\beta b+\gamma=0$, with $\alpha,\beta,\gamma\in\F_q$, first gives $\beta+\gamma=0$ upon taking traces, and then $\beta=0$ upon multiplying by $a$ and taking traces; hence all coefficients vanish. In the corresponding coordinates
\[
p_{a^2X+abY+aZ}\longleftrightarrow(X,Y,Z)
\]
of $\pi$, we have $h:Y=0$. Set
\[
p_a=(0,0,1),\qquad \ell:Z=0,
\]
and, for $s\in\F_q^*$, let $C_s$ be the non-degenerate conic of $\pi$ given by
\[
X^2+XY+\delta^2Y^2=s^2Z^2.
\]
These $q-1$ conics are pairwise disjoint, have common nucleus $p_a$, and each meets $h$ in exactly one point. They are also disjoint from $\ell$. Hence
\[
\{p_a\}\ \cup\ \ell\ \cup\ \left(\bigcup_{s\in\F_q^*}C_s\right)
\]
is a partition of $\pi$.

Let $B_\infty$ be the Baer subline of $\mathcal P$ given by
\[
B_\infty=\{P_{ra+t\sqrt{ab}}:(r,t)\in\PG(1,q)\}.
\]
The defining vectors are independent over $\F_{q^2}$, since $\F_{q^3}\cap \F_{q^2}=\F_q$ and $a^2,ab$ are independent over $\F_q$. As they belong to $\F_{q^3}$,
\[
N(ra+t\sqrt{ab})=a^2r^2+ab\,t^2,\qquad r,t\in\F_q,
\]
so $\nu$ maps $B_\infty$ bijectively onto $\ell$.

For $s\in\F_q^*$, the polynomial
\[
f(Z)=a(1+sa)Z^2+aZ+a(\delta+sb)
\]
is irreducible over $\F_{q^3}$. Indeed,
\[
\begin{aligned}
\Tr_{q^3|2}\bigl((1+sa)(\delta+sb)\bigr)
&=\Tr_{q|2}\!\left(\Tr_{q^3|q}\bigl((1+sa)(\delta+sb)\bigr)\right)\\
&=\Tr_{q|2}(\delta+s+s^2)\\
&=1.
\end{aligned}
\]
Choose $z_s\in \F_{q^6}\setminus \F_{q^3}$ such that $f(z_s)=0$. Then $z_s+z_s^{q^3}=1/(1+sa)$ and $z_s^{q^3+1}=(\delta+sb)/(1+sa)$. Set $u_s\in \F_{q^6}^*$ with $N(u_s)=a(1+sa)$, and $v_s=u_sz_s$. Then
\[
\begin{aligned}
N(ru_s+tv_s)
&=a(1+sa)r^2+art+a(\delta+sb)t^2\\
&=a^2sr^2+ab\,st^2+a(r^2+rt+\delta t^2),\qquad r,t\in\F_q.
\end{aligned}
\]
The vectors $u_s,v_s$ are independent over $\F_{q^2}$, otherwise $z_s\in \F_{q^2}$, implying $z_s+z_s^{q^3}=z_s+z_s^q=1/(1+sa)\in\F_q$, which is impossible since $a\notin\F_q$. Hence
\[
B_s=\{P_{ru_s+tv_s}:(r,t)\in\PG(1,q)\}
\]
is a Baer subline. In the chosen coordinates of $\pi$, we have
\[
\nu(B_s)=\{(sr^2,st^2,r^2+rt+\delta t^2):(r,t)\in\PG(1,q)\}=C_s,
\]
and this correspondence is bijective.

Whenever $\nu$ maps a subline $B$ bijectively onto a set $D$, the action of $G$ on each set $\nu^{-1}(p_y)$, with $p_y\in D$, yields
\[
\nu^{-1}(D)=\bigcup_{g\in G}g(B).
\]
Indeed, $B$ meets $\nu^{-1}(p_y)$ in exactly one point for each $p_y\in D$, and its $G$-translates partition $\nu^{-1}(D)$. To see disjointness explicitly, if $g(P)=g'(P')$ with $P,P'\in B$, applying $\nu$ gives $P=P'$ by injectivity on $B$; semiregularity then gives $g=g'$.

Let
\[
S=\nu^{-1}(p_a)=\{P_x:N(x)/a\in\F_q^\star\}.
\]
Since $\Tr_{q^3|q}(a)=0$, $S\subseteq \mathcal H(2,q^2)$, and $|S|=q^2-q+1$. The required partition is
\begin{equation}\label{eq:partition}
S\ \cup\ \{g(B_\infty):g\in G\}
\ \cup\ \{g(B_s):s\in \F_q^*,\ g\in G\}.
\end{equation}
Disjointness and completeness follow from the partition of $\pi$ given by $\{p_a\}\cup\ell\cup\bigl(\bigcup_{s\in\F_q^*}C_s\bigr)$ and the preceding observation.

Finally,
\[
\Tr_{q^3|q}(N(ra+t\sqrt{ab}))=t^2,\qquad
\Tr_{q^3|q}(N(ru_s+tv_s))=s\,t^2.
\]
Thus every initial subline meets $\mathcal H(2,q^2)$ exactly at the point corresponding to $(r,t)=(1,0)$. Since $G$ preserves $\mathcal H(2,q^2)$, every subline in \eqref{eq:partition} is unisecant to $\mathcal H(2,q^2)$. There are $q(q^2-q+1)$ such sublines and $q^2-q+1$ points, as asserted.
\end{proof}

\subsection{Higher dimensions}\label{sec:induction}

Here we extend the preceding constructions to higher dimensions.

In $\PG(2,q^2)$, let $\ell$ be a line and let $P$ be a point of
$\PG(2,q^2)$, with $P\notin\ell$. Let $\mathcal H(1,q^2)$ be a Hermitian variety of
$\ell$, and let $\ell_0$ be a Baer subline of $\ell$ such that
$\ell_0\cap\mathcal H(1,q^2)=\{Q\}$. Denote by $\mathcal C_S$
the cone with vertex $P$ and base the point set $S\subseteq\ell$.

\begin{lemma}\label{lem:cone-lifting}
The cone $\mathcal C_{\ell_0}\setminus\{P\}$ can be partitioned
into $q^2$ Baer sublines. Every subline in the partition meets
$\mathcal C_{\mathcal H(1,q^2)}$ in exactly one point.
\end{lemma}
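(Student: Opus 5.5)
Coordinatize the configuration. Take $\ell: X_2=0$, $P=(0,0,1)$, and choose coordinates on $\ell$ so that $\ell_0=\{(r,t,0):(r,t)\in\PG(1,q)\}$, i.e. $\ell_0$ is the canonical Baer subline, with $Q=(1,0,0)$. The cone $\mathcal C_{\ell_0}\setminus\{P\}$ is then
\[
\{(r,t,\lambda):(r,t)\in\PG(1,q),\ \lambda\in\F_{q^2}\}.
\]
It has $(q+1)q^2$ points, which is exactly $q^2$ times the size of a Baer subline. For each $\lambda\in\F_{q^2}$ consider the map $\phi_\lambda:(x_0,x_1,x_2)\mapsto(x_0,x_1,x_2+\lambda x_0)$. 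This is an elation of $\PG(2,q^2)$ with axis the line $PQ$ ($x_0=0$) and centre $P$. It fixes $P$ and every line through $P$, and maps $\ell_0$ to
\[
B_\lambda=\{(r,t,\lambda r):(r,t)\in\PG(1,q)\}.
\]
Since $\phi_\lambda$ is a collineation, $B_\lambda$ is a Baer subline. We claim that the $q^2$ sublines $B_\lambda$ partition the cone minus $P$. The point $(r,t,\mu)$ with $r\neq 0$ (normalise $r=1$) lies only in $B_\mu$. The points with $r=0$ are the points $(0,1,\mu)$ of the line $PQ'$, where $Q'=(0,1,0)$, and each $B_\lambda$ meets that line only in $Q'$. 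So this naive choice fails: every $B_\lambda$ passes through $Q'$.

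The fix is to use elations whose centre is not on a generator, i.e. translations in the affine plane $\PG(2,q^2)\setminus\ell'$ for a suitable line $\ell'$. We need a group of order $q^2$ that fixes $P$, stabilises every line through $P$, and acts regularly on $PR\setminus\{P\}$ for each $R\in\ell_0$. Elations with centre $P$ and a fixed axis $m$ through $P$ form such a group (of order $q^2$), but they fix the points of $m$. We therefore choose the axis $m$ through $P$ with $m\cap\ell\notin\ell_0$. Then $m$ meets the cone only in $P$.

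With this choice, the points of $PR\setminus\{P\}$, for $R\in\ell_0$, form a single orbit of size $q^2$ under the elation group $E$, which acts regularly on them. Hence the images $e(\ell_0)$, $e\in E$, are $q^2$ Baer sublines, and each point of the cone other than $P$ lies in exactly one of them. Explicitly, if $(x,y,0)$ and $(x',y',0)$ are distinct points of $\ell_0$, they lie on different generators, and the lines through $P$ are fixed by $E$. So two translates $e(\ell_0)$, $e'(\ell_0)$ can share a point only on a single generator $PR$, and regularity there gives $e=e'$. Concretely, take $m:x_0+\omega x_1=0$ with $\omega\notin\F_q$, and $\phi_\lambda:x_2\mapsto x_2+\lambda(x_0+\omega x_1)$. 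The form $x_0+\omega x_1$ does not vanish on $\ell_0$, which gives regularity on every generator.

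For the intersection property, note that $\mathcal C_{\mathcal H(1,q^2)}$ is a union of generators through $P$. Each $e(\ell_0)$ meets the generator $PR$ exactly in $e(R)$, for $R\in\ell_0$, and $e(\ell_0)$ avoids $P$. Hence
\[
e(\ell_0)\cap\mathcal C_{\mathcal H(1,q^2)}=e\bigl(\ell_0\cap\mathcal H(1,q^2)\bigr)=\{e(Q)\},
\]
which is a single point. The main point needing care is the choice of axis: it must meet $\ell$ outside $\ell_0$ so that no point of the cone is fixed, and such a point exists because $q^2+1>q+1$.
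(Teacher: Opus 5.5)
Your argument is correct and is essentially the paper's proof: each elation $e$ with centre $P$ and axis $m$ fixes $R=m\cap\ell\notin\ell_0$ and every line through $P$, so $e(\ell_0)=\{PX\cap e(\ell):X\in\ell_0\}$, and as $e$ runs over the group the lines $e(\ell)$ run over the $q^2$ lines through $R$ other than $PR$, which yields exactly the paper's sublines $\mathcal B_t$. The only slip is in your discarded first attempt, where the axis $x_0=0$ is the line $PQ'$ with $Q'=(0,1,0)$ rather than $PQ$; that attempt can simply be dropped.
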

\begin{proof}
Choose $R\in\ell\setminus\ell_0$, and let $\mathcal T_R$ be
the pencil of lines through $R$ with $PR$ omitted. It has $q^2$
members, none containing $P$. For $t\in\mathcal T_R$, put
$\mathcal B_t=\{PX\cap t:X\in\ell_0\}$.
The set $\{\mathcal B_t:t\in\mathcal T_R\}$ consists of $q^2$
pairwise disjoint Baer sublines partitioning
$\mathcal C_{\ell_0}\setminus\{P\}$.
Finally, $\ell_0\cap\mathcal H(1,q^2)=\{Q\}$ implies
$\mathcal C_{\ell_0}\cap\mathcal C_{\mathcal H(1,q^2)}=PQ$.
Each $\mathcal B_t$ meets the generator $PQ$ exactly once, and hence
$\mathcal B_t\cap\mathcal C_{\mathcal H(1,q^2)}=\{PQ\cap t\}$.
\end{proof}

\begin{theorem}\label{thm:extension}
Let $q$ be even and $n\ge4$. Let $\mathcal H(n-1,q^2)$ be a
non-degenerate Hermitian variety of $\Pi=\PG(n-1,q^2)$ and let
$\mathcal H(n-3,q^2)$ be a non-degenerate Hermitian variety of
$\PG(n-3,q^2)$. If $\PG(n-3,q^2)$ has a partition into
\[
 \frac{(q^{n-4}+(-1)^n)(q^{n-2}-(-1)^n)}{q^2-1}
\]
points of $\mathcal H(n-3,q^2)$ and
\[
 \frac{q^{n-4}(q^{n-2}-(-1)^n)}{q+1}
\]
Baer sublines, each meeting $\mathcal H(n-3,q^2)$ in exactly one
point, then $\Pi$ has a partition into
\[
 \frac{(q^{n-2}+(-1)^n)(q^n-(-1)^n)}{q^2-1}
\]
points of $\mathcal H(n-1,q^2)$ and
\[
 \frac{q^{n-2}(q^n-(-1)^n)}{q+1}
\]
Baer sublines, each meeting $\mathcal H(n-1,q^2)$ in exactly one point.
\end{theorem}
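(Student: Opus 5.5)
We prove the statement by splitting $\Pi$ into a tangent hyperplane and its complement. Fix a point $P\in\mathcal H(n-1,q^2)$ and let $P^\perp$ be its tangent hyperplane with respect to the Hermitian polarity. Choose an $(n-3)$-dimensional subspace $\Sigma\subset P^\perp$ with $P\notin\Sigma$.

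Standard facts about Hermitian varieties give two things. First, $P^\perp\cap\mathcal H(n-1,q^2)$ is the cone with vertex $P$ over $\mathcal H':=\Sigma\cap\mathcal H(n-1,q^2)$, and $\mathcal H'$ is a non-degenerate Hermitian variety of $\Sigma\cong\PG(n-3,q^2)$. Second, every line through $P$ not contained in $P^\perp$ is a non-tangent line, so it meets $\mathcal H(n-1,q^2)$ in a non-degenerate $\mathcal H(1,q^2)$ (a Baer subline) that contains $P$. Since all non-degenerate Hermitian varieties of $\Sigma$ are projectively equivalent, the hypothesis gives a partition of $\Sigma$ into $p_{n-2}$ points of $\mathcal H'$ and $b_{n-2}$ Baer sublines, each unisecant to $\mathcal H'$.

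\emph{The hyperplane $P^\perp$.} The set $P^\perp\setminus\{P\}$ is partitioned by the cones $\mathcal C_X\setminus\{P\}$ with vertex $P$, where $X$ runs over the parts of the partition of $\Sigma$.
\begin{itemize}
\item If $X=A$ is a point of $\mathcal H'$, the generator $PA\setminus\{P\}$ lies in $\mathcal H(n-1,q^2)$. It contributes $q^2$ points of $\mathcal H(n-1,q^2)$.
\item If $X=b$ is a subline, let $\ell$ be the line of $\Sigma$ containing $b$. Inside the plane $\langle P,\ell\rangle$ we have $\mathcal H(n-1,q^2)\cap\langle P,\ell\rangle=\mathcal C_{\ell\cap\mathcal H'}$, where $\ell\cap\mathcal H'$ is an $\mathcal H(1,q^2)$ (possibly degenerate) meeting $b$ in exactly one point. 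The argument of Lemma~\ref{lem:cone-lifting} applies verbatim: it only uses $b\cap\mathcal H(1,q^2)=\{Q\}$. So $\mathcal C_b\setminus\{P\}$ splits into $q^2$ Baer sublines, each meeting $\mathcal H(n-1,q^2)$ exactly once, on $PQ$.
\end{itemize}
Together with $P$ itself, $P^\perp$ contributes $1+q^2p_{n-2}$ points of $\mathcal H(n-1,q^2)$ and $q^2b_{n-2}$ unisecant sublines.

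\emph{The affine part $\Pi\setminus P^\perp$.} It is partitioned by the $q^{2n-4}$ lines $m$ through $P$ not in $P^\perp$, each contributing $q^2$ points. On each such $m$, apply Theorem~\ref{thm:line} to $m\cong\PG(1,q^2)$ with its Hermitian set $m\cap\mathcal H(n-1,q^2)$, prescribing $P$ as one of the two Hermitian points, as that proof allows. Then $m\setminus\{P\}$ is partitioned into one further point of $\mathcal H(n-1,q^2)$ and $q-1$ Baer sublines of $m$, each meeting $m\cap\mathcal H(n-1,q^2)$, and hence $\mathcal H(n-1,q^2)$, in exactly one point.

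\emph{Count.} Write $\varepsilon=(-1)^n$. There are $1+q^2p_{n-2}+q^{2n-4}$ points of $\mathcal H(n-1,q^2)$ and $q^2b_{n-2}+(q-1)q^{2n-4}$ sublines in the resulting partition. Both checks are routine algebra.
\begin{itemize}
\item Sublines: since $q^2b_{n-2}=q^{n-2}(q^{n-2}-\varepsilon)/(q+1)$, the identity $b_n-q^2b_{n-2}=q^{2n-4}(q-1)$ holds.
\item Points: equivalently $p_n=|\Pi|-(q+1)b_n$, where $|\Pi|=(q^{2n}-1)/(q^2-1)$. This matches the total $1+q^2p_{n-2}+q^{2n-4}$ once the subline count agrees.
\end{itemize}

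The main obstacle is the geometric input rather than the counting. One must verify carefully that $P^\perp\cap\mathcal H(n-1,q^2)$ is exactly the cone over a non-degenerate $\mathcal H'$ in the complement $\Sigma$. One must also check that Lemma~\ref{lem:cone-lifting} transfers to the plane $\langle P,\ell\rangle$ even when $\ell$ is tangent to $\mathcal H'$, so that $\ell\cap\mathcal H'$ is a single point. I would handle this by observing that the lemma's proof only uses $b\cap\mathcal H=\{Q\}$.
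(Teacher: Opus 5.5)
Your proposal is correct and follows the paper's proof essentially step by step. Like the paper, you take the tangent hyperplane at $P$ as a cone over a complement $\cong\PG(n-3,q^2)$, lift the sublines of the assumed partition with Lemma~\ref{lem:cone-lifting}, apply Theorem~\ref{thm:line} with $P$ prescribed on each of the $q^{2n-4}$ secant lines through $P$ outside the hyperplane, and finish with the same count. You also make explicit three details the paper leaves implicit: the lemma only needs $b\cap\mathcal H=\{Q\}$, which covers lines of $b$ tangent to $\mathcal H'$; the hypothesis transfers to $\mathcal H'$ by projective equivalence; and the point count follows from the subline count via $p_n=|\Pi|-(q+1)b_n$.
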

\begin{proof}
Choose $P\in\mathcal H(n-1,q^2)$ and let $T$ be the tangent
hyperplane to $\mathcal H(n-1,q^2)$ at $P$. Choose a subspace
$S\cong\PG(n-3,q^2)$ such that $T=\langle P,S\rangle$ and
$\mathcal H(n-1,q^2)\cap T$ is the cone with vertex $P$ and
non-degenerate base $\mathcal H(n-1,q^2)\cap S$, which we identify
with $\mathcal H(n-3,q^2)$. By hypothesis $S$ has a partition into
a subset $\mathcal A_0$ of
$(q^{n-4}+(-1)^n)(q^{n-2}-(-1)^n)/(q^2-1)$ points of
$\mathcal H(n-3,q^2)$ and a set $\mathcal B_0$ of
$q^{n-4}(q^{n-2}-(-1)^n)/(q+1)$ Baer sublines, each meeting
$\mathcal H(n-3,q^2)$ in exactly one point. We want to show the
existence of a partition of $\Pi$ consisting of a subset
$\mathcal A$ of $(q^{n-2}+(-1)^n)(q^n-(-1)^n)/(q^2-1)$ points
of $\mathcal H(n-1,q^2)$ and a set $\mathcal B$ of
$q^{n-2}(q^n-(-1)^n)/(q+1)$ Baer sublines, each meeting
$\mathcal H(n-1,q^2)$ in exactly one point.

For each $Q\in\mathcal A_0$, include all the $q^2+1$ points of the line
$PQ$ in $\mathcal A$. For each Baer subline
$\ell_0\in\mathcal B_0$, by Lemma~\ref{lem:cone-lifting}, the cone
$\mathcal C_{\ell_0}\setminus\{P\}$ can be partitioned into $q^2$
Baer sublines, each of them meeting $\mathcal H(n-1,q^2)$ in
exactly one point. For each $\ell_0\in\mathcal B_0$, include these
$q^2$ Baer sublines in $\mathcal B$. Thus $T$ is partitioned into
$1+q^2|\mathcal A_0|$ points of $\mathcal H(n-1,q^2)$ and $q^2|\mathcal B_0|$
Baer sublines, each meeting $\mathcal H(n-1,q^2)$ in exactly one
point.

For each line $\ell$ through $P$ not contained in $T$, the line is
secant to $\mathcal H(n-1,q^2)$, and
$\ell\cap\mathcal H(n-1,q^2)$ is a Baer subline containing
$P$. Choose $R_\ell\in (\ell\cap\mathcal H(n-1,q^2))\setminus\{P\}$. By Theorem~\ref{thm:line}, there is a partition of $\ell$ into the
two points $P,R_\ell$ of $\mathcal H(n-1,q^2)$ and $q-1$ Baer
sublines, each
meeting $\mathcal H(n-1,q^2)$ in exactly one point. Include $R_\ell$ in $\mathcal A$ and these $q-1$
Baer sublines in $\mathcal B$. Distinct lines
through $P$ not contained in $T$ meet only at $P$, and every point
of $\Pi\setminus T$ belongs to exactly one such line. Thus these
partitions are mutually disjoint and cover the complement of $T$.

There are $q^{2n-2}$ points outside $T$, and each line through $P$
not contained in $T$ has $q^2$ such points. Hence there are
$q^{2n-4}$ such lines. Each contributes with one point of $\mathcal A$ and $q-1$
Baer sublines. Adding these contributions to the partition of $T$
gives
\[
 |\mathcal A|=1+q^2|\mathcal A_0|+q^{2n-4},\qquad
 |\mathcal B|=q^2|\mathcal B_0|+(q-1)q^{2n-4}.
\]
Substituting the values of $|\mathcal A_0|$ and $|\mathcal B_0|$ yields the stated
formulas.
\end{proof}

The preceding extension theorem now yields the partition in every
projective dimension.

\begin{theorem}\label{thm:hermitian-partition}
Let $q$ be even and $n\ge 2$, and let $\mathcal H(n-1,q^2)$ be a
non-degenerate Hermitian variety in $\PG(n-1,q^2)$. Put
$\varepsilon=(-1)^n$. Then there exists a partition of
$\PG(n-1,q^2)$ into
\[
\frac{(q^{n-2}+\varepsilon)(q^n-\varepsilon)}{q^2-1}
\]
points of $\mathcal H(n-1,q^2)$ and
\[
\frac{q^{n-2}(q^n-\varepsilon)}{q+1}
\]
Baer sublines, each meeting $\mathcal H(n-1,q^2)$ in exactly one point.
\end{theorem}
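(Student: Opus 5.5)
The plan is induction on $n$ in steps of two. The base cases are $n=2$ and $n=3$, and Theorem~\ref{thm:extension} carries each case from $n-2$ to $n$. The parity of $n$ is preserved by this step, so $\varepsilon=(-1)^n=(-1)^{n-2}$ agrees in the hypothesis and the conclusion of Theorem~\ref{thm:extension}. Hence every $n\ge 4$ is reached from the base case of the same parity.

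For $n=2$ I would use Theorem~\ref{thm:line}. Here $\varepsilon=1$, and the formulas give
\[
\frac{(1+1)(q^2-1)}{q^2-1}=2,\qquad \frac{q^2-1}{q+1}=q-1.
\]
These are exactly the two points of $\mathcal H(1,q^2)$ and the $q-1$ Baer sublines produced there. For $n=3$ I would use Theorem~\ref{thm:plane}. Here $\varepsilon=-1$, and
\[
\frac{(q-1)(q^3+1)}{q^2-1}=q^2-q+1,\qquad \frac{q(q^3+1)}{q+1}=q(q^2-q+1),
\]
which again match the counts in that theorem.

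One point needs care: Theorem~\ref{thm:plane} is proved for the specific curve $\{P_x:\Tr_{q^3|q}(N(x))=0\}$ in the cyclic model. I would note that all non-degenerate Hermitian varieties of $\PG(n-1,q^2)$ are projectively equivalent. A collineation maps Baer sublines to Baer sublines and preserves incidence with the variety, so the partition transfers to an arbitrary $\mathcal H(2,q^2)$. The same remark covers $n=2$ and also justifies identifying the section $\mathcal H(n-1,q^2)\cap S$ with a given $\mathcal H(n-3,q^2)$ in the inductive step.

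For the inductive step with $n\ge4$, the induction hypothesis gives the partition of $\PG(n-3,q^2)$ with the numbers required in Theorem~\ref{thm:extension}, because $(-1)^{n-2}=(-1)^n$. That theorem then yields the partition of $\PG(n-1,q^2)$. Its proof ends with ``substituting the values yields the stated formulas'', so I would double-check this identity, and I expect it to be the only real check. For $|\mathcal A|$ one needs
\[
1+q^2\,\frac{(q^{n-4}+\varepsilon)(q^{n-2}-\varepsilon)}{q^2-1}+q^{2n-4}=\frac{(q^{n-2}+\varepsilon)(q^n-\varepsilon)}{q^2-1}.
\]
Multiplying by $q^2-1$ and expanding both sides gives $q^{2n-2}+\varepsilon q^n-\varepsilon q^{n-2}-1$ on each side. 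For $|\mathcal B|$ one needs
\[
q^2\,\frac{q^{n-4}(q^{n-2}-\varepsilon)}{q+1}+(q-1)q^{2n-4}=\frac{q^{n-2}(q^n-\varepsilon)}{q+1}.
\]
Multiplying by $q+1$, both sides equal $q^{2n-2}-\varepsilon q^{n-2}$.
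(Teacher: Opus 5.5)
Your proposal is correct and matches the paper's proof. Both argue by induction on $n$ in steps of two, with Theorem~\ref{thm:line} and Theorem~\ref{thm:plane} as the base cases $n=2,3$ and Theorem~\ref{thm:extension} as the inductive step. Your explicit verification of the counting identities, and your remark that all non-degenerate Hermitian varieties are projectively equivalent (so the cyclic-model curve and the identification of $\mathcal H(n-1,q^2)\cap S$ with $\mathcal H(n-3,q^2)$ are harmless), fill in details the paper leaves implicit.
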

\begin{proof}
By induction on $n$, separately for even and odd $n$, the result follows. Theorem~\ref{thm:line}
provides the case $n=2$, with two points and $q-1$ Baer sublines;
Theorem~\ref{thm:plane} provides the case $n=3$, with
$q^2-q+1$ points and $q(q^2-q+1)$ Baer sublines. For $n\ge4$,
apply Theorem~\ref{thm:extension} to the partition of $\PG(n-3, q^2)$. This completes the induction.
\end{proof}

Combining Theorem~\ref{thm:hermitian-partition} and Theorem~\ref{thm:replacement}, a line spread of $\mathcal W(2n-1,q)$ for every even prime power $q$ is obtained.

\begin{corollary}\label{cor:existence}
For every even prime power $q$ and every $n\ge2$, the symplectic polar
space $\cW(2n-1,q)$ admits a line spread obtained by simultaneous
opposite-regulus replacement in a Desarguesian line spread.
\end{corollary}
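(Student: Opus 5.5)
The corollary follows by combining Theorem~\ref{thm:hermitian-partition} with Theorem~\ref{thm:replacement}. The only extra work is to check that the geometric setting assumed before Theorem~\ref{thm:replacement} can always be realised. So the plan has two stages: first realise that setting, then feed the partition into the replacement theorem.

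First, we fix $q$ even and $n\ge2$, and we need a Baer embedding $\Sigma\subset\PG(2n-1,q^2)$, a subspace $\Pi$ with $\Pi\cap\Sigma=\varnothing=\Pi\cap\Pi^\sigma$, and a compatible symplectic polarity. We realise this concretely by field reduction. Take $V=\F_{q^2}^n$ with a nondegenerate Hermitian form $h$. Regard $V$ as a $2n$-dimensional $\F_q$-space with the quadratic form $Q(v)=h(v,v)\in\F_q$. Its polar form is $\Tr_{q^2|q}h(u,v)$, which is alternating and nondegenerate, so it defines $\cW(2n-1,q)$. The $\F_{q^2}$-points of $V$ give the Desarguesian spread $\mathcal S$. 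Tensoring $V$ with $\F_{q^2}$ over $\F_q$ splits the extension into the two eigenspaces $\Pi$ and $\Pi^\sigma$ of the $\F_{q^2}$-scalar action. This gives the standard model with $\Pi\cap\Sigma=\varnothing$, $\Pi\cap\Pi^\sigma=\varnothing$, and $\ell_P$ the spread line of $P$.

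Second, we check the Hermitian hypothesis. A spread line $\langle v\rangle_{q^2}$ is totally singular for $Q$ exactly when $h(v,v)=0$. It is a line of $\cW(2n-1,q)$ exactly when $\Tr_{q^2|q}(\lambda h(v,v))=0$ for all $\lambda\in\F_{q^2}$, which again means $h(v,v)=0$. So the spread lines lying in $\cW(2n-1,q)$ are those indexed by points of $\cH(n-1,q^2)$. The theorem actually needs the union of all $\overline{\ell}\cap\Pi$, over all totally isotropic lines $\ell$, to equal $\cH(n-1,q^2)$. I expect this identification to be the main obstacle. To handle it, I would show that a point $P\in\Pi$ lies on the extension of an isotropic line only if $\ell_P$ is itself isotropic. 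The argument: $\overline{\ell}$ meets $\Pi$ at $P$ and, by $\sigma$-invariance, meets $\Pi^\sigma$ at $P^\sigma$. Hence $\overline{\ell}=\langle P,P^\sigma\rangle$ and $\ell=\ell_P$. This settles the hypothesis, and the quadric $\cQ^\varepsilon$ has the type forced by the counts in Lemma~\ref{lem:counts}.

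Third, we apply Theorem~\ref{thm:hermitian-partition} to $\Pi\cong\PG(n-1,q^2)$ and this $\cH(n-1,q^2)$. It gives a set $\mathcal A$ of $(q^{n-2}+\varepsilon)(q^n-\varepsilon)/(q^2-1)$ Hermitian points and a set $\mathcal B$ of $q^{n-2}(q^n-\varepsilon)/(q+1)$ unisecant Baer sublines, together partitioning $\Pi$. We then invoke Theorem~\ref{thm:replacement}. The spread it produces is exactly $\{\ell_P:P\in\mathcal A\}\cup\bigcup_{b\in\mathcal B}\mathcal R_b'$, which is a simultaneous opposite-regulus replacement of the pairwise disjoint reguli $\mathcal R_b$ in $\mathcal S$, as the corollary claims.

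As a consistency check, the line counts agree with Lemma~\ref{lem:counts}. The lines in $\cQ^\varepsilon$ number $|\mathcal A|$, and the tangent lines number $(q+1)|\mathcal B|=q^{n-2}(q^n-\varepsilon)$.
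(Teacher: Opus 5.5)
Your proposal is correct and follows the paper's route: the corollary is obtained by feeding the partition of Theorem~\ref{thm:hermitian-partition} into Theorem~\ref{thm:replacement}. Your extra field-reduction step realises the standing setting that the paper only assumes before Theorem~\ref{thm:replacement}, and it is correct. That step uses $Q(v)=h(v,v)$ with alternating polar form $\Tr_{q^2|q}h(u,v)$, together with the observation that $\overline{\ell}\cap\Pi\neq\varnothing$ forces $\ell=\ell_P$, which identifies the union of the sets $\overline{\ell}\cap\Pi$ with $\mathcal H(n-1,q^2)$.
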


\section*{Acknowledgments}
The authors acknowledge support from the Italian National Group for Algebraic and Geometric Structures and their Applications (GNSAGA--INdAM).


\begin{thebibliography}{99}
\bibitem{DeWinterMetsch2020}
S.~De Winter and K.~Metsch,
\emph{Perfect 2-colorings of the Grassmann graph of planes},
Electron. J. Combin. \textbf{27} (2020), Paper P1.21.
\doi{10.37236/8672}.

\bibitem{Hering1985}
C.~Hering,
\emph{Two new sporadic doubly transitive linear spaces},
in Finite Geometries (Winnipeg, 1984),
Lecture Notes in Pure and Applied Mathematics, vol.~103,
Dekker, New York, 1985, pp.~127--129.

\bibitem{Thas1981}
J.~A.~Thas,
\emph{Ovoids and spreads of finite classical polar spaces},
Geom. Dedicata \textbf{10} (1981), 135--143.
\doi{10.1007/BF01447417}.

\bibitem{BE}
S.~Barwick and G.~Ebert,
\emph{Unitals in Projective Planes},
Springer Monographs in Mathematics, Springer, New York, 2008.
\doi{10.1007/978-0-387-76366-8}.

\bibitem{Hirschfeld}
J.~W.~P.~Hirschfeld,
\emph{Finite Projective Spaces of Three Dimensions},
Oxford Mathematical Monographs, Clarendon Press, Oxford, 1985.

\bibitem{HT}
J.~W.~P.~Hirschfeld and J.~A.~Thas,
\emph{General Galois Geometries}, second edition,
Springer Monographs in Mathematics, Springer, London, 2016.
\doi{10.1007/978-1-4471-6790-7}.

\bibitem{LV}
M.~Lavrauw and G.~Van de Voorde,
\emph{Field reduction and linear sets in finite geometry},
in Topics in Finite Fields, Contemporary Mathematics, vol.~632,
American Mathematical Society, 2015.

\bibitem{PT}
S.~E.~Payne and J.~A.~Thas,
\emph{Finite Generalized Quadrangles}, second edition,
EMS Series of Lectures in Mathematics,
European Mathematical Society, Z\"urich, 2009.
\doi{10.4171/066}.
\end{thebibliography}
\end{document}